\numberwithin{equation}{section} \DeclareMathSizes{2}{10}{12}{13}
\newtheorem{thm}{Proposition}[section]
\newtheorem{cor}[thm]{Corollary}
\newtheorem{lem}[thm]{Lemma}
\newtheorem{defn}[thm]{Definition}
\title{On some spectral spaces associated to tensor triangulated categories}
\author{Abhishek Banerjee}
\date{ }
\begin{document}

\maketitle

\centerline{\emph{Dept. of Mathematics, Indian Institute of Science, Bangalore-560012, India.}}
\centerline{\emph{Email: abhishekbanerjee1313@gmail.com}}

\smallskip

\begin{abstract} We consider a closure operator $c$ of finite type on the space $SMod(\mathcal M)$
of thick $\mathcal K$-submodules of a triangulated category $\mathcal M$ that is a module
over a tensor triangulated category $(\mathcal K,\otimes,1)$. Our purpose is to show that the space
$SMod^c(\mathcal M)$ of fixed points of the operator $c$ is a spectral space that also
carries the structure of a topological monoid. 
\end{abstract}

\smallskip

\smallskip
\emph{MSC (2010) Subject Classification: 13B22, 18E30.}  

\smallskip
\emph{Keywords:}   Spectral spaces, tensor triangulated categories. 

\smallskip

\smallskip

\section{Introduction}

\smallskip

\smallskip
The study of tensor triangular geometry was begun by Paul Balmer in \cite{BMain}, where he associated
to a tensor triangulated category $(\mathcal K,\otimes,1)$ a spectrum $Spec(\mathcal K)$ of ``prime thick tensor ideals'' 
of $\mathcal K$. The classification of thick subcategories in tensor triangular geometry (see Balmer \cite{BMain}, \cite{Balmer2}, 
\cite{Balmer3}, \cite{Balmer4} and also Balmer and Favi \cite{BF1}, \cite{BF2}) unites ideas from far and wide in 
mathematics : from that of Benson, Carlson and Rickard \cite{BCR} in modular representation theory, that of Devinatz, Hopkins and Smith \cite{DHS}  in homotopy theory 
 to that of Thomason \cite{Thom} in algebraic geometry. As such, tensor triangular geometry over the years has emerged
as a field of study in itself (see Klein \cite{Kl}, \cite{Kl2}, Peter \cite{Pete}, Sanders \cite{Bernie} and Stevenson 
\cite{GS}, \cite{GS2}, \cite{GS4}).  

\smallskip
Given  a tensor triangulated category $(\mathcal K,\otimes,1)$, its spectrum $Spec(\mathcal K)$ defined by
Balmer \cite{BMain} is a spectral space, i.e., it must be homeomorphic to the Zariski spectrum of a commutative ring. 
In \cite{Hoch}, Hochster famously characterized spectral spaces in purely topological terms. More recently,
Finocchiaro \cite[Corollary 3.3]{Fino} has obtained a new criterion for a topological space to be spectral,
using ultrafilters to give if and only if conditions for a collection of subsets to be a subbasis of quasi-compact
opens of a spectral space (see also related work by Finocchiaro, Fontana and Loper in \cite{Fino2}). Further,
Finocchiaro's criterion  has recently been used by 
Finocchiaro, Fontana and Spirito \cite{FiFo}  to give several natural examples of spectral spaces appearing 
in commutative algebra. More precisely, if $M$ is a module over a commutative ring $R$ and 
$c$ is a closure operator of finite type (see \cite[$\S$ 3]{FiFo} for definitions) on submodules of $M$, we know from \cite[Proposition 3.4]{FiFo}
that the space of submodules of $M$ fixed by $c$ is a spectral space. In \cite{AB2}, we have shown that
these methods can be adapted more generally to abelian categories. In this paper,
we use closure operators to create spectral spaces associated to a module over a tensor triangulated category. This also
fits in well with the general philosophy that notions in abelian categories should have parallels in 
triangulated categories (see, for instance, Krause \cite{Krause}). For more on closure operators
in commutative algebra, see, for instance, Epstein \cite{Eps}, \cite{Eps1}. 

\smallskip
In this paper, we begin in Section 2 by considering a triangulated category $\mathcal M$ that is a module
over a tensor triangulated category $(\mathcal K,\otimes,1)$ in the sense of Stevenson \cite{GS}. 
We consider a closure operator and more generally, an operator $c$ on the space 
$SMod(\mathcal M)$ of thick $\mathcal K$-submodules of $\mathcal M$ that 
is extensive, order-preserving and of finite type (see Definition \ref{RD2.1}). Then, our first main
result is that the space $SMod^c(\mathcal M)$ of fixed points of the operator $c$ is a spectral
space. For instance, if $\mathcal K=\mathcal M$, then the radical is an example of a closure operator
of finite type on the thick tensor ideals of $\mathcal K$. More generally, we characterize closure
operators of finite type in terms of families of submodules satisfying certain conditions. In particular, this implies
that any family of thick submodules  of $\mathcal M$ closed under intersections and filtered
directed unions is an example of a  spectral space.

\smallskip
Thereafter, in Section 3, we study thick $\mathcal K$-submodules of $\mathcal M$ generated
by a given set $X$ of objects of $\mathcal M$. In particular, we show that each object 
 in the submodule generated by $X$ can be obtained starting from only finitely many objects
of $X$. We then use this to show that if $c:SMod(\mathcal M)\longrightarrow SMod(\mathcal M)$
is an operator that is extensive, order-preserving and of finite type, the space 
 $SMod^c(\mathcal M)$ of fixed points of $c$ actually becomes a topological monoid.  Finally, we mention here that in
order to avoid certain set theoretical complications, we will assume that all categories
in this paper 
are essentially small, i.e., the isomorphism classes of their objects form a set.

\smallskip
{\bf Acknowledgements: } I am grateful  for the hospitality of the Stefan Banach Center at the IMPAN in Warsaw, where
part of this paper was written. 

\smallskip

\section{Spectral spaces and tensor triangulated actions}

\smallskip
Throughout this section and the rest of this paper, $(\mathcal K,\otimes,1)$ will be a tensor triangulated category.  In other
words, $\mathcal K$ is a triangulated category equipped with a symmetric monoidal product $\otimes :
\mathcal K\times \mathcal K\longrightarrow \mathcal K$ that is exact in each variable. The unit object in $\mathcal K$
is denoted by $1\in \mathcal K$.  A thick tensor ideal $\mathcal I$ in $(\mathcal K,\otimes,1)$ is a thick triangulated and full 
subcategory of $\mathcal K$ such that $b\otimes a\in \mathcal I$ for any object $a\in \mathcal I$
and any $b\in \mathcal K$. Then, following Balmer \cite{BMain}, we say that a thick tensor ideal $\mathcal P
\subseteq \mathcal K$ is prime if $x\otimes y\in \mathcal P$ for some $x$, $y\in \mathcal K$ means that
at least one of $x$ and $y$ is in $\mathcal P$.
 In \cite{BMain}, Balmer began the study of tensor triangular geometry by constructing the 
 spectral space $Spec(\mathcal K)$ of prime ideals in $(\mathcal K,\otimes,1)$. 

\smallskip
We now consider a triangulated category $\mathcal M$ that is a ``module''
over $(\mathcal K,\otimes,1)$ in the sense of Stevenson \cite{GS}. In other words, we have an action:
\begin{equation}
\ast : \mathcal K\times \mathcal M\longrightarrow \mathcal M
\end{equation} that is exact in both variables, satisfies appropriate associativity, distributivity and unit
properties and is well behaved with respect to the translation functor on both $\mathcal K$
and $\mathcal M$ (see \cite[Definition 3.2]{GS}). The translation functor on $\mathcal K$ (resp. on $\mathcal M$)
will be denoted by $T_{\mathcal K}$ (resp. by $T_{\mathcal M}$), but whenever there is no danger of confusion,
we will drop the subscripts and refer to both of these translation functors simply as $T$.

\smallskip
\begin{defn}\label{D3.1} (see \cite[Definition 3.4]{GS}) Let 
$\mathcal M$ be a triangulated category that is a module over a tensor triangulated category
$(\mathcal K,\otimes,1)$ via an action $\ast : \mathcal K\times \mathcal M\longrightarrow \mathcal M$. A full subcategory  $\mathcal L\subseteq \mathcal M$ containing $0$ is said to be a thick 
$\mathcal K$-submodule of $\mathcal M$ if it satisfies the following three conditions:

\smallskip
(a) The composition of functors  
\begin{equation}
\mathcal K\times \mathcal L\hookrightarrow \mathcal K\times \mathcal  M\overset{\ast}{\longrightarrow} \mathcal M
\end{equation} factors through $\mathcal L$. 

\smallskip
(b) Given objects $m$, $m' \in \mathcal M$, then $m\oplus m'\in \mathcal L$ if and only if 
both $m$, $m'\in \mathcal L$. 

\smallskip
(c) For any distinguished triangle $m'\longrightarrow m\longrightarrow m''$ in $\mathcal M$, if
two out of the three objects $m'$, $m$, $m''$ lie in $\mathcal L$, so does the third.  

\smallskip
The collection of all thick $\mathcal K$-submodules of $\mathcal M$ will be denoted by
$SMod(\mathcal M)$. 

\end{defn}

\smallskip
When $(\mathcal K,\otimes,1)$ is considered as a module over itself via the action $\otimes :\mathcal K
\times \mathcal K\longrightarrow \mathcal K$, Definition \ref{D3.1} reduces to the notion
of a thick tensor ideal  in \cite[Definition 1.1]{BMain}.  

\smallskip
We also notice that condition (c) in Definition \ref{D3.1} implies that any $\mathcal K$-submodule $\mathcal L$ 
of $\mathcal M$ must be replete. In other words, if $m\in \mathcal L$ and $m'\in \mathcal M$ is any other object such that
we have an isomorphism $m\cong m'$, we must have $m'\in \mathcal L$. Now, since $\mathcal M$ is assumed
to be essentially small, it follows that the collection $SMod(\mathcal M)$ is always a set. 

\smallskip We now define a topology on $SMod(\mathcal M)$ by declaring the following as a subbasis
for open sets:
\begin{equation}\label{eq3.3}
U(m):=\{\mbox{$\mathcal N\in SMod(\mathcal M)$ $\vert$ $m\in \mathcal N$}\}\subseteq SMod(\mathcal M)\qquad \forall\textrm{ }m\in \mathcal M
\end{equation}
We should remark here that the definition in \eqref{eq3.3} is rather the ``opposite'' of what we would expect from 
looking at \cite[$\S$ 2]{FiFo} and more generally at 
the usual constructions in commutative algebra. However, this reversal is actually common in tensor triangular geometry 
(see \cite[$\S$ 2]{BMain}).  

\smallskip
\begin{thm}\label{P3.2} Let $\mathcal M$ be a triangulated category that is a module over
a tensor triangulated category $(\mathcal K,\otimes,1)$. Then, the collection $\{U(m)\}_{m\in \mathcal M}$ satisfies the following properties: 

\smallskip
(a) $U(0)=SMod(\mathcal M)$. 

\smallskip
(b) For any distinguished triangle $m'\longrightarrow m\longrightarrow m''$ in $\mathcal M$, we have
$U(m)\supseteq U(m')\cap U(m'')$. 

\smallskip
(c) For objects $m$, $m'\in \mathcal M$, we have $U(m\oplus m')=U(m)\cap U(m')$. 

\smallskip
(d) For any $a\in \mathcal K$ and any $m\in \mathcal M$, we have $U(m)\subseteq U(a\ast m)$.  

\smallskip
(e) If $T_{\mathcal M}$ is the translation functor on $\mathcal M$, we have $U(T_{\mathcal M}(m))=U(m)$ for each $m\in \mathcal M$. 
\end{thm}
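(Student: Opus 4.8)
The plan is to verify each of the five properties (a)--(e) directly from Definition~\ref{D3.1}, since each one is essentially an immediate translation of one of the defining closure conditions for thick $\mathcal{K}$-submodules.

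For (a), I would observe that every $\mathcal{N}\in SMod(\mathcal{M})$ contains $0$ by the requirement in Definition~\ref{D3.1} that a thick $\mathcal{K}$-submodule contain $0$; hence $0\in\mathcal{N}$ for all $\mathcal{N}$, so $U(0)=SMod(\mathcal{M})$. For (b), let $m'\to m\to m''$ be a distinguished triangle and take $\mathcal{N}\in U(m')\cap U(m'')$, so that $m',m''\in\mathcal{N}$; then condition (c) of Definition~\ref{D3.1} (two out of three in the triangle lie in $\mathcal{N}$ implies the third does) forces $m\in\mathcal{N}$, i.e. $\mathcal{N}\in U(m)$. For (c), I would use condition (b) of Definition~\ref{D3.1}: for any $\mathcal{N}$, we have $m\oplus m'\in\mathcal{N}$ if and only if both $m\in\mathcal{N}$ and $m'\in\mathcal{N}$; reading this as a statement about membership gives $U(m\oplus m')=U(m)\cap U(m')$ directly.

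For (d), take $\mathcal{N}\in U(m)$, so $m\in\mathcal{N}$; since $\mathcal{N}$ is a thick $\mathcal{K}$-submodule, condition (a) of Definition~\ref{D3.1} says the action $\mathcal{K}\times\mathcal{N}\to\mathcal{M}$ factors through $\mathcal{N}$, so $a\ast m\in\mathcal{N}$ for every $a\in\mathcal{K}$, giving $\mathcal{N}\in U(a\ast m)$ and hence $U(m)\subseteq U(a\ast m)$. For (e), I would use that for any object $m$ there is a distinguished triangle $m\to 0\to T_{\mathcal{M}}(m)$ (the translation triangle), so applying (b)-type reasoning together with $U(0)=SMod(\mathcal{M})$ shows $U(T_{\mathcal{M}}(m))\supseteq U(m)\cap U(0)=U(m)$; applying the same to $T_{\mathcal{M}}^{-1}(m)$ in place of $m$ (using that $T_{\mathcal{M}}$ is an equivalence on a triangulated category) gives the reverse inclusion $U(m)=U(T_{\mathcal{M}}(T_{\mathcal{M}}^{-1}(m)))\supseteq U(T_{\mathcal{M}}^{-1}(m))$, and then substituting $T_{\mathcal{M}}(m)$ for $m$ yields $U(m)\supseteq U(T_{\mathcal{M}}(m))$, so the two are equal.

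None of these steps presents a serious obstacle: the proof is a routine unpacking of the axioms, and the only point requiring a moment's care is part (e), where one must be slightly careful to invoke both the distinguished triangle $m\to 0\to T_{\mathcal{M}}(m)$ and the invertibility of $T_{\mathcal{M}}$ to get equality rather than just one inclusion. An alternative for (e) is to note directly that $\mathcal{L}$ being closed under the translation functor (which follows from condition (c) applied to the triangle $m\to 0\to T_{\mathcal{M}}(m)$ together with closure under isomorphism) means $m\in\mathcal{N}\iff T_{\mathcal{M}}(m)\in\mathcal{N}$, which is exactly $U(m)=U(T_{\mathcal{M}}(m))$.
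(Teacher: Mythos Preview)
Your proof is correct. Parts (a)--(d) are exactly the paper's approach: each is an immediate unwinding of the corresponding clause in Definition~\ref{D3.1}. The only divergence is in part (e). The paper invokes the compatibility of the $\mathcal{K}$-action with translation, namely the isomorphism $T_{\mathcal M}(m)\cong T_{\mathcal K}(1)\ast m$ coming from Stevenson's axioms for a $\mathcal{K}$-module, and then applies part (d) to get $U(m)\subseteq U(T_{\mathcal M}(m))$; the reverse inclusion comes from $T_{\mathcal M}^{-1}(T_{\mathcal M}(m))\cong T_{\mathcal K}^{-1}(1)\ast T_{\mathcal M}(m)$. Your argument instead uses only the triangulated structure: the distinguished triangle $m\to 0\to T_{\mathcal M}(m)$ together with the two-out-of-three property (Definition~\ref{D3.1}(c)) and $0\in\mathcal N$ shows directly that $m\in\mathcal N\iff T_{\mathcal M}(m)\in\mathcal N$. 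Your route is slightly more self-contained, since it does not appeal to the specific compatibility axiom between $\ast$ and the translations; the paper's route, on the other hand, makes explicit use of the module structure and ties (e) back to (d). One small remark: your first formulation for (e) appeals to ``(b)-type reasoning'' to get $U(T_{\mathcal M}(m))\supseteq U(m)\cap U(0)$, but part (b) as literally stated concerns only the middle term of a triangle; the clean justification is exactly your alternative phrasing via Definition~\ref{D3.1}(c), which is symmetric in the three vertices.
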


\begin{proof} The properties (a), (b), (c) and (d) follow directly from Definition \ref{D3.1}. For part (e), we notice
the isomorphism $T_{\mathcal M}(m)\cong T_{\mathcal K}(1)\ast m$ for any $m\in \mathcal M$ (see \cite[Definition 3.1]{GS}) which
gives us 
$U(m)\subseteq U(T_{\mathcal M}(m))$. However, we also have the isomorphism $T_{\mathcal M}^{-1}(T_{\mathcal M}(m))
\cong T_{\mathcal K}^{-1}(1)\ast T_{\mathcal M}(m)$ which gives $U(T_{\mathcal M}(m))\subseteq U(m)$. 

\end{proof}

\begin{cor}\label{Cor3.3} Consider the set $SMod(\mathcal M)$ along with the topology given by 
$\{U(m)\}_{m\in \mathcal M}$ forming a subbasis of open sets. Then, the collection $\{U(m)\}_{m\in \mathcal M}$ forms
a basis of open sets in the topology on $SMod(\mathcal M)$. 
\end{cor}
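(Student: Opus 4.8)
The plan is to show that arbitrary finite intersections of the subbasic sets $U(m)$ are again of the form $U(m')$ for a suitable object $m' \in \mathcal M$; this immediately upgrades the subbasis to a basis, since any finite intersection of subbasic opens is then itself subbasic. The empty intersection is the whole space, which by Proposition \ref{P3.2}(a) equals $U(0)$, so that case is covered. For the inductive step it suffices to treat the intersection of two sets $U(m)$ and $U(m')$.

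The key observation is Proposition \ref{P3.2}(c): for any $m, m' \in \mathcal M$ we have $U(m \oplus m') = U(m) \cap U(m')$. Thus given $U(m_1) \cap \cdots \cap U(m_n)$, repeated application yields $U(m_1) \cap \cdots \cap U(m_n) = U(m_1 \oplus \cdots \oplus m_n)$, which is again a member of the family $\{U(m)\}_{m \in \mathcal M}$. Hence every finite intersection of subbasic opens lies in the family, so $\{U(m)\}_{m \in \mathcal M}$ is closed under finite intersection and therefore forms a basis for the topology it generates. I would write this out in one or two lines, first recording the $n = 0$ case via part (a), then the general case by induction using part (c).

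There is essentially no obstacle here: the statement is a direct formal consequence of Proposition \ref{P3.2}(a) and (c), and the only thing to be slightly careful about is the bookkeeping convention for the empty intersection (handled by $U(0) = SMod(\mathcal M)$) and the fact that a subbasis that happens to be closed under finite intersections is automatically a basis. One should also note that $\mathcal M$ being essentially small is irrelevant for this particular corollary; it is used elsewhere to ensure $SMod(\mathcal M)$ is a set. The proof is therefore short and purely combinatorial.
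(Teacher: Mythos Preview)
Your proposal is correct and follows essentially the same approach as the paper: the paper's proof simply notes that it suffices to show the collection $\{U(m)\}_{m\in\mathcal M}$ is closed under finite intersections, which follows from Proposition~\ref{P3.2}(c). Your version is slightly more explicit in handling the empty intersection via part~(a), but the argument is the same.
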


\begin{proof} It suffices to show that the collection $\{U(m)\}_{m\in \mathcal M}$ is closed under finite intersections. This 
follows from part (c) of Proposition \ref{P3.2} above. 
\end{proof}

\smallskip
Given any $m\in \mathcal M$, we denote by $\mathcal K(m)$ the smallest thick $\mathcal K$-submodule of $\mathcal M$ containing
$m$. From condition (b) in Definition \ref{D3.1}, it is immediate that a thick $\mathcal K$-submodule generated
by a finite set $\{m_1,...,m_k\}$ of objects of $\mathcal M$ coincides with the submodule generated
by the single object $m_1\oplus m_2\oplus ...\oplus m_k$. The following notions should be compared to the definition
of closure operators in module categories (see \cite[Definition 7.0.1]{Eps}). 

\begin{defn}\label{RD2.1} Let $\mathcal M$ be a triangulated category that is a  module over $(\mathcal K,\otimes,1)$. An operator 
\begin{equation}
c: SMod(\mathcal M)\longrightarrow SMod(\mathcal M)
\end{equation} will be said to be :

\smallskip
(a) Extensive if $\mathcal N\subseteq c(\mathcal N)$ for each $\mathcal N\in SMod(\mathcal M)$. 

\smallskip
(b) Order-preserving if $\mathcal N\subseteq \mathcal N'$ implies that $c(\mathcal N)\subseteq c(\mathcal N')$. 

\smallskip
(c) Idempotent if $c(\mathcal N)=c(c(\mathcal N))$ for each $\mathcal N\in SMod(\mathcal M)$. 

\smallskip
(d) Finite type if $c(\mathcal N)=\underset{n\in \mathcal N}{\bigcup}c(\mathcal K(n))$. 

\smallskip
We will refer to an operator satisfying (a), (b) and (c) as a closure operator on $SMod(\mathcal M)$. A closure operator $c$ that also satisfies
(d) will be called a closure operator of finite type.
\end{defn}

Given an operator $c$ as in Definition \ref{RD2.1}, we set $SMod^c(\mathcal M)$ to be the collection of submodules of $\mathcal M$ that are fixed by $c$. Our aim is to give conditions for $SMod^c(\mathcal M)$ to be a spectral space.

\smallskip 
First, we recall (see, for instance, \cite[$\S$ 1]{Fino}) that a filter $\mathfrak U$ on a set $X$ is a collection
of subsets of $X$ such that: (a) $\phi\notin \mathfrak U$, (b) $Y$, $Z\in \mathfrak U$ $\Rightarrow$ $Y\cap Z\in 
\mathfrak U$ and (c) $Y\subseteq Z\subseteq X$ and $Y\in \mathfrak U$ implies that $Z\in \mathfrak U$. An ultrafilter
$\mathfrak U$ is a maximal element in the collection of filters on $X$. 

\begin{thm}\label{P3.4} Let $\mathcal M$ be a triangulated category that is a module over
a tensor triangulated category $(\mathcal K,\otimes,1)$. Let $c:SMod(\mathcal M)
\longrightarrow SMod(\mathcal M)$ be an operator that is extensive, order-preserving and of finite type. Then, $SMod^c(\mathcal M)$
is a spectral space having the collection $\{U(m)\cap SMod^c(\mathcal M)\}_{m\in \mathcal M}$ as  a basis of quasi-compact open subspaces. 
\end{thm}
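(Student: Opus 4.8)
The plan is to invoke Finocchiaro's ultrafilter criterion (\cite[Corollary 3.3]{Fino}): a set $X$ equipped with a collection $\{U_i\}_{i\in I}$ of subsets is a spectral space with the $U_i$ as a subbasis of quasi-compact opens precisely when, for every ultrafilter $\mathfrak{U}$ on $X$, the ``ultrafilter limit point''
\[
X_{\mathfrak{U}}:=\{x\in X\ \vert\ \text{for all }i\in I,\ x\in U_i\iff U_i\in\mathfrak{U}\}
\]
is nonempty. Here $X=SMod^c(\mathcal{M})$ and the subbasis is $\{U(m)\cap SMod^c(\mathcal{M})\}_{m\in\mathcal{M}}$; by Corollary \ref{Cor3.3} this is already a basis, so once we know it is a subbasis of quasi-compact opens of a spectral topology, the basis statement follows. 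So the whole task reduces to: given an ultrafilter $\mathfrak{U}$ on $SMod^c(\mathcal{M})$, produce a fixed point $\mathcal{N}_{\mathfrak{U}}\in SMod^c(\mathcal{M})$ with $m\in\mathcal{N}_{\mathfrak{U}}\iff U(m)\cap SMod^c(\mathcal{M})\in\mathfrak{U}$ for every $m\in\mathcal{M}$.

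The key step is to \emph{define} the candidate limit point by the only formula the condition allows:
\[
\mathcal{N}_{\mathfrak{U}}:=\{m\in\mathcal{M}\ \vert\ U(m)\cap SMod^c(\mathcal{M})\in\mathfrak{U}\}.
\]
First I would check that $\mathcal{N}_{\mathfrak{U}}$ is a thick $\mathcal{K}$-submodule, i.e.\ lies in $SMod(\mathcal{M})$. This is where Proposition \ref{P3.2} does all the work, combined with the filter axioms: $0\in\mathcal{N}_{\mathfrak{U}}$ because $U(0)=SMod^c(\mathcal{M})$ is the whole space, which lies in any ultrafilter; closure under $\oplus$ in both directions follows from $U(m\oplus m')=U(m)\cap U(m')$ (part (c)) together with the filter axioms (intersection of two members is a member, and a member is upward closed, so if $U(m\oplus m')\in\mathfrak{U}$ then both $U(m),U(m')\in\mathfrak{U}$ since $U(m\oplus m')\subseteq U(m)$); the two-out-of-three property for a distinguished triangle $m'\to m\to m''$ uses part (b) — $U(m)\supseteq U(m')\cap U(m'')$ — applied to each of the three rotations of the triangle, again using upward closure and finite intersection of filter members; and stability under the $\mathcal{K}$-action uses $U(m)\subseteq U(a\ast m)$ (part (d)) and upward closure. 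So $\mathcal{N}_{\mathfrak{U}}\in SMod(\mathcal{M})$, and by construction it satisfies the equivalence $m\in\mathcal{N}_{\mathfrak{U}}\iff U(m)\cap SMod^c(\mathcal{M})\in\mathfrak{U}$.

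The main obstacle — and the place where the finite-type hypothesis on $c$ is essential — is showing that $\mathcal{N}_{\mathfrak{U}}$ is actually a \emph{fixed point} of $c$, i.e.\ $c(\mathcal{N}_{\mathfrak{U}})=\mathcal{N}_{\mathfrak{U}}$; without this we would only have a point of $SMod(\mathcal{M})$, not of $SMod^c(\mathcal{M})$. Extensivity gives $\mathcal{N}_{\mathfrak{U}}\subseteq c(\mathcal{N}_{\mathfrak{U}})$ for free, so the content is the reverse inclusion. Since $c$ is of finite type, $c(\mathcal{N}_{\mathfrak{U}})=\bigcup_{n\in\mathcal{N}_{\mathfrak{U}}}c(\mathcal{K}(n))$, so it suffices to fix $n\in\mathcal{N}_{\mathfrak{U}}$ and show $c(\mathcal{K}(n))\subseteq\mathcal{N}_{\mathfrak{U}}$; equivalently, for each $m\in c(\mathcal{K}(n))$, that $U(m)\cap SMod^c(\mathcal{M})\in\mathfrak{U}$. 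The idea is: $n\in\mathcal{N}_{\mathfrak{U}}$ means $U(n)\cap SMod^c(\mathcal{M})\in\mathfrak{U}$, so this set is nonempty; any $\mathcal{L}$ in it is a fixed point containing $n$, hence contains $\mathcal{K}(n)$ and, being fixed, contains $c(\mathcal{K}(n))$, so contains $m$. This shows the \emph{inclusion} $U(n)\cap SMod^c(\mathcal{M})\subseteq U(m)\cap SMod^c(\mathcal{M})$ as subsets of $SMod^c(\mathcal{M})$; then upward closure of the filter $\mathfrak{U}$ promotes $U(n)\cap SMod^c(\mathcal{M})\in\mathfrak{U}$ to $U(m)\cap SMod^c(\mathcal{M})\in\mathfrak{U}$, as desired. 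Hence $c(\mathcal{N}_{\mathfrak{U}})\subseteq\mathcal{N}_{\mathfrak{U}}$, so $\mathcal{N}_{\mathfrak{U}}\in SMod^c(\mathcal{M})$, so $X_{\mathfrak{U}}\neq\varnothing$, and Finocchiaro's criterion yields the claim. I would double-check the one subtle point in applying the criterion: that intersecting the global subbasis $\{U(m)\}_{m\in\mathcal{M}}$ with the subspace $SMod^c(\mathcal{M})$ really does give the subspace topology and that the criterion is being applied to the right ambient set (the subspace, with its own subsets), which the computation above is set up to respect.
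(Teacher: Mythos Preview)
Your proof is correct and follows essentially the same route as the paper: define the ultrafilter limit $\mathcal N_{\mathfrak U}$ by the forced formula, use Proposition \ref{P3.2} and the filter axioms to see it is a thick $\mathcal K$-submodule, use finite type plus order-preservation to see it is $c$-fixed, and apply Finocchiaro's criterion together with Corollary \ref{Cor3.3}. The only point the paper makes explicit that you omit is a one-line verification of the $T_0$-axiom (which is part of the hypothesis in \cite[Corollary 3.3]{Fino}); this is immediate, since two distinct submodules differ in some object $m$ and are then separated by $U(m)\cap SMod^c(\mathcal M)$.
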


\begin{proof} We first verify that $SMod^c(\mathcal M)$
satisfies the $T_0$-axiom. If $\mathcal L$, $\mathcal L'$ are two distinct points of $SMod^c(\mathcal M)$, there
exists an object $m\in \mathcal M$ that lies in exactly one of $\mathcal L$ and $\mathcal L'$. Then, $U(m)
\cap SMod^c(\mathcal M)$
is an open set that contains exactly one of the two points $\mathcal L$ and $\mathcal L'$.

\smallskip
Now suppose that $\mathfrak U$ is an ultrafilter on the space $SMod^c(\mathcal M)$. We now set
\begin{equation}\label{eq3.4}
\mathcal L_{\mathfrak U}:=\{\mbox{$m\in \mathcal M$ $\vert$ $U(m)\cap SMod^c(\mathcal M)\in \mathfrak U$}\}
\end{equation} First, we show  that $\mathcal L_{\mathfrak U}\in SMod(\mathcal M)$, i.e., 
$\mathcal L_{\mathfrak U}$ is a submodule. If $m\in \mathcal L_{\mathfrak U}$, then for any
object $a\in \mathcal K$, $U(a\ast m)\cap SMod^c(\mathcal M)\supseteq U(m)\cap SMod^c(\mathcal M)\in\mathfrak U$ and hence $U(a\ast m)\cap SMod^c(\mathcal M)\in \mathfrak U$, i.e.,
$a\ast m\in \mathcal L_{\mathfrak U}$. 

\smallskip
On the other hand, since $U(m\oplus m')=U(m)\cap U(m')$ for any  objects $m$, $m'\in \mathcal M$, it follows that
$m\oplus m'\in \mathcal L_{\mathfrak U}$ if and only if both $m$, $m'\in \mathcal L_{\mathfrak U}$.  Also, given a
distinguished triangle $m'\longrightarrow m\longrightarrow m''$ with two of $m'$, $m$ and $m''\in \mathcal L_{\mathfrak U}$,
it follows from parts (b) and (e) of Proposition \ref{P3.2} that the third object also lies in $\mathcal L_{\mathfrak U}$. 

\smallskip
Next, we claim that $\mathcal L_{\mathfrak U}\in SMod^c(\mathcal M)$. For this, we choose some
$m\in c(\mathcal L_{\mathfrak U})$. Since $c$ is of finite type, we can find some object $n\in \mathcal L_{\mathfrak U}$
such that $m\in c(\mathcal K(n))$. We now consider some submodule $\mathcal N\in U(n)\cap SMod^c(\mathcal M)$. Since
$c$ is order-preserving, we have $c(\mathcal K(n))\subseteq c(\mathcal N)=\mathcal N$ and hence $m\in \mathcal N$.
It follows that $U(n)\cap SMod^c(\mathcal M)\subseteq U(m)\cap SMod^c(\mathcal M)$ and 
$\mathfrak U$ being an ultrafilter, we get that $m\in \mathcal L_{\mathfrak U}$, i.e.,
$c(\mathcal L_{\mathfrak U})\subseteq \mathcal L_{\mathfrak U}$. Further since $c$ is extensive,
we get $\mathcal L_{\mathfrak U}=c(\mathcal L_{\mathfrak U})$.

\smallskip
Finally, suppose that for some object $m\in \mathcal M$, the subset $U(m)\cap SMod^c(\mathcal M)$ lies in the ultrafilter $\mathfrak U$. Then, 
from \eqref{eq3.4}, it follows that $m\in \mathcal L_{\mathfrak U}$ and hence $\mathcal L_{\mathfrak U}
\in U(m)\cap SMod^c(\mathcal M)$. Conversely, if $\mathcal L_{\mathfrak U}\in U(m)\cap SMod^c(\mathcal M)$
for some $m\in \mathcal M$, then $m\in \mathcal L_{\mathfrak U}$ and hence
$U(m)\cap SMod^c(\mathcal M)\in \mathfrak U$. The result now follows by applying Finocchiaro's criterion 
\cite[Corollary 3.3]{Fino} to the subbasis $\{U(m)\cap SMod^c(\mathcal M)\}_{m\in \mathcal M}$
of the space $SMod^c(\mathcal M)$. Additionally, from Corollary \ref{Cor3.3}, it follows that
$\{U(m)\cap SMod^c(\mathcal M)\}_{m\in \mathcal M}$ is actually a basis for the spectral
space $SMod^c(\mathcal M)$. 
\end{proof}

\smallskip
For example, if we take $\mathcal K$ as a module over itself, the radical defines a closure operator
on the thick tensor ideals of $(\mathcal K,\otimes,1)$. We recall here (see \cite[Definition 4.1]{BMain}) that the radical  $rad(\mathcal I)$
of a thick tensor ideal $\mathcal I\subseteq \mathcal K$ is defined as follows:
\begin{equation}\label{rad2.6}
rad(\mathcal I):=\{\mbox{$a\in \mathcal K$ $\vert$ $\exists$ $n\geq 1$ such that $a^{\otimes n}\in \mathcal I$ }\}
\end{equation} From \eqref{rad2.6} it is also clear that the radical is a closure operator of finite type. We can give another
example of an extensive and order-preserving  operator of finite type as follows: let $\mathcal S$ be a multiplicatively
closed family of objects of $\mathcal K$. Then, to any thick tensor ideal $\mathcal I$ in $(\mathcal K,\otimes,1)$,
we associate the ideal (see \cite[$\S$ 2]{AB}):
\begin{equation}\label{div}
\mathcal I\div \mathcal S:=\{\mbox{$a\in \mathcal K$ $\vert$ $\exists$ $s\in \mathcal S$ such that $a\otimes s
\in \mathcal I$}\}
\end{equation} From Proposition \ref{P3.4} it follows that the collection of thick tensor ideals fixed by
these operators are spectral spaces. We will conclude this section by describing a more explicit method for obtaining closure operators
of finite type in terms of families of  submodules. For closure operators in abelian categories, we have
made similar constructions in \cite{AB2}. 

\begin{thm}\label{Pfamily} Let $\mathcal M$ be a triangulated category that is a module over a tensor
triangulated category $(\mathcal K,\otimes,1)$. Let $\mathfrak F=\{\mathcal F_i\}_{i\in I}$ be a family of submodules
of $\mathcal M$ such that $\mathcal M\in \mathfrak F$. Then, the following statements are equivalent:

\smallskip
(1) The family $\mathfrak F$ satisfies the following two conditions: 

\smallskip
(a) Given any non-empty subset $J\subseteq I$, we have $\bigcap_{j\in J}\mathcal F_j\in \mathfrak F$. 

\smallskip
(b) Let $\{\mathcal F_k\}_{k\in K}$ be a filtered directed collection of objects from the family $\mathfrak F$. Then,
the filtered directed union $\bigcup_{k\in K}\mathcal F_k$ also lies in $\mathfrak F$. 

\smallskip
(2) There exists a closure operator $c:SMod(\mathcal M)\longrightarrow SMod(\mathcal M)$ of finite type 
such that $\mathfrak F$ is the collection of fixed points of $c$. In particular,
$\mathfrak F$ is a spectral space with $\{U(m)\cap \mathfrak F\}_{m\in \mathcal M}$ being a basis
of quasi-compact open sets.  
\end{thm}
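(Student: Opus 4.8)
The plan is to prove the two implications separately, with the bulk of the work going into $(1)\Rightarrow(2)$. For $(1)\Rightarrow(2)$, I would define, for each $\mathcal N\in SMod(\mathcal M)$,
\begin{equation}
c(\mathcal N):=\bigcap\{\mathcal F_i\ \vert\ i\in I,\ \mathcal N\subseteq \mathcal F_i\}.
\end{equation}
This intersection is over a non-empty index set (since $\mathcal M\in\mathfrak F$), so by condition (a) it lands in $\mathfrak F$, hence in $SMod(\mathcal M)$; thus $c$ is well-defined with values in $\mathfrak F$. Extensivity is immediate, and order-preservation is clear since enlarging $\mathcal N$ shrinks the set of $\mathcal F_i$ containing it. Idempotency follows because $c(\mathcal N)\in\mathfrak F$ and $c$ fixes every member of $\mathfrak F$ (if $\mathcal F_i\in\mathfrak F$ then $\mathcal F_i$ appears among the sets intersected to form $c(\mathcal F_i)$, forcing $c(\mathcal F_i)\subseteq\mathcal F_i\subseteq c(\mathcal F_i)$). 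Moreover one checks directly that the fixed points of $c$ are exactly $\mathfrak F$: every $\mathcal F_i$ is fixed, and conversely if $c(\mathcal N)=\mathcal N$ then $\mathcal N=c(\mathcal N)\in\mathfrak F$.

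The main obstacle is verifying that $c$ is of finite type, i.e. $c(\mathcal N)=\bigcup_{n\in\mathcal N}c(\mathcal K(n))$. The inclusion $\supseteq$ is easy from order-preservation. For $\subseteq$, I would first argue that $\mathcal N=\bigcup_{n\in\mathcal N}\mathcal K(n)$ is a \emph{filtered directed} union: given $n_1,n_2\in\mathcal N$, the submodule $\mathcal K(n_1\oplus n_2)$ contains both $\mathcal K(n_1)$ and $\mathcal K(n_2)$ (using that the submodule generated by a finite set equals the one generated by the direct sum, as noted in the excerpt), and it lies among the $\mathcal K(n)$ since $n_1\oplus n_2\in\mathcal N$. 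Consequently $\{c(\mathcal K(n))\}_{n\in\mathcal N}$ is also a filtered directed family: $c$ is order-preserving, so $c(\mathcal K(n_1)),c(\mathcal K(n_2))\subseteq c(\mathcal K(n_1\oplus n_2))$. Then by condition (b), $D:=\bigcup_{n\in\mathcal N}c(\mathcal K(n))$ lies in $\mathfrak F$, hence is a fixed point of $c$. Since $\mathcal N\subseteq D$ (extensivity applied termwise) and $D$ is an $\mathfrak F$-member containing $\mathcal N$, minimality of $c(\mathcal N)$ gives $c(\mathcal N)\subseteq D$. This establishes finite type.

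For the converse $(2)\Rightarrow(1)$, suppose $c$ is a closure operator of finite type with fixed-point set $\mathfrak F$. Given a non-empty $J\subseteq I$, put $\mathcal G:=\bigcap_{j\in J}\mathcal F_j$; since $c$ is order-preserving, $c(\mathcal G)\subseteq c(\mathcal F_j)=\mathcal F_j$ for every $j\in J$, so $c(\mathcal G)\subseteq\mathcal G$, and extensivity gives $c(\mathcal G)=\mathcal G$, i.e. $\mathcal G\in\mathfrak F$. For (b), let $\{\mathcal F_k\}_{k\in K}$ be filtered directed with union $\mathcal G=\bigcup_k\mathcal F_k$. If $m\in c(\mathcal G)$, finite type supplies $n\in\mathcal G$ with $m\in c(\mathcal K(n))$; by directedness $n\in\mathcal F_{k_0}$ for some $k_0$, hence $\mathcal K(n)\subseteq\mathcal F_{k_0}$ and $m\in c(\mathcal K(n))\subseteq c(\mathcal F_{k_0})=\mathcal F_{k_0}\subseteq\mathcal G$. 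So $c(\mathcal G)\subseteq\mathcal G$, and with extensivity $\mathcal G\in\mathfrak F$. Finally, the ``in particular'' clause that $\mathfrak F$ is a spectral space with the stated basis follows by applying Proposition \ref{P3.4} to the finite-type closure operator $c$ produced in $(1)\Rightarrow(2)$.
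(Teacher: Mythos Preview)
Your proof is correct, and the $(2)\Rightarrow(1)$ direction is essentially identical to the paper's. In $(1)\Rightarrow(2)$ you and the paper construct the \emph{same} closure operator but from opposite ends: the paper sets $c(\mathcal K(m)):=\bigcap_{m\in\mathcal F\in\mathfrak F}\mathcal F$ and then \emph{defines} $c(\mathcal N):=\bigcup_{n\in\mathcal N}c(\mathcal K(n))$, so finite type is built into the definition and the work goes into checking idempotency (and into observing, via (a) and (b), that the image always lands in $\mathfrak F$). You instead take the global intersection $c(\mathcal N):=\bigcap\{\mathcal F\in\mathfrak F\mid \mathcal N\subseteq\mathcal F\}$, which makes idempotency and the identification of fixed points immediate, and shifts the work to the finite type verification, where you invoke (b) to see that the filtered union $D=\bigcup_{n\in\mathcal N}c(\mathcal K(n))$ lies in $\mathfrak F$ and hence dominates $c(\mathcal N)$. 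Each approach trades one easy axiom for one that needs an argument; yours is the standard ``closure from a closure system'' definition and is perhaps the more transparent of the two.
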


\begin{proof} (1) $\Rightarrow$ (2) : For each object $m\in \mathcal M$, we begin by setting:
\begin{equation}\label{finint}
c(\mathcal K(m)):=\bigcap_{m\in \mathcal F, \mathcal F\in \mathfrak F}\mathcal F
\end{equation} Now, for any submodule $\mathcal N\in SMod(\mathcal M)$, we define the operator
$c:SMod(\mathcal M)\longrightarrow SMod(\mathcal M)$ as follows:
\begin{equation}\label{clsum}
c(\mathcal N):=\bigcup_{n\in \mathcal N} c(\mathcal K(n))
\end{equation} From \eqref{finint} and \eqref{clsum}, it is immediate that $c$ is extensive, order-preserving
and of finite type. We now choose some $n'\in c(\mathcal N)$. It follows from \eqref{clsum} that we can find
some $n''\in \mathcal N$ such that $n'\in c(\mathcal K(n''))$. Now, if $\mathcal F\in \mathfrak F$ is such that
$n''\in \mathcal F$, we have $n'\in c(\mathcal K(n''))\subseteq \mathcal F$. Then, $c(\mathcal K(n'))\subseteq 
\mathcal F$ and hence $c(\mathcal K(n'))\subseteq c(\mathcal K(n''))$. Since $c(c(\mathcal N))=\bigcup_{n'\in c(\mathcal N)}
c(\mathcal K(n'))$, it now follows that $c(c(\mathcal N))\subseteq c(\mathcal N)$ and hence $c(c(\mathcal N))=c(
\mathcal N)$. 

\smallskip
We now pick some $\mathcal F\in \mathfrak F$. For each object $f\in \mathcal F$, it follows from \eqref{finint} that $c(\mathcal K(f))\subseteq \mathcal F$. As 
we go over all objects in $\mathcal F$, the expression in \eqref{clsum} shows that $c(\mathcal F)\subseteq 
\mathcal F$ and hence $c(\mathcal F)=\mathcal F$ for each $\mathcal F\in \mathfrak F$. Conversely, we notice that
the right hand sides of \eqref{finint} and \eqref{clsum} always lie in $\mathfrak F$ and hence 
any fixed point of the operator $c$ must lie in $\mathfrak F$. 

\smallskip
(2) $\Rightarrow$ (1) : Given a non-empty subset $J\subseteq I$, we have:
\begin{equation}
c(\bigcap_{j\in J}\mathcal F_j)\subseteq \bigcap_{j\in J}c(\mathcal F_j)=\bigcap_{j\in J}\mathcal F_j
\end{equation} Combining with the fact that $c$ is extensive, it follows that $\bigcap_{j\in J}
\mathcal F_j$ is a fixed
point of $c$.  On the other hand, let $\{\mathcal F_k\}_{k\in K}$ be a filtered directed family
of objects from $\mathfrak F$ and consider $\mathcal F=\bigcup_{k\in K}\mathcal F_k$. We now
pick some object $f\in \mathcal F$. Then, we can find some $k_0\in K$ such that $f\in \mathcal F_{k_0}$. 
But then, $c(\mathcal K(f))\subseteq c(\mathcal F_{k_0})=\mathcal F_{k_0}$. Since $c$
is of finite type, this now gives $c(\mathcal F)=\bigcup_{f\in \mathcal F}c(\mathcal K(f))\subseteq 
\bigcup_{k\in K}\mathcal F_k=\mathcal F$ and hence $c(\mathcal F)=\mathcal F$. 

\end{proof}

\section{Topological monoids and $\mathcal K$-modules} 

\smallskip
Suppose that $c:SMod(\mathcal M)
\longrightarrow SMod(\mathcal M)$ is an operator that is extensive, order-preserving and of finite type. In this section, our aim is to show that  the spectral space $SMod^c(\mathcal M)$ is a topological
monoid. For this, we will now obtain a more explicit description for the 
smallest thick submodule $\mathcal K(X)\in SMod(\mathcal M)$ containing a generating set $X$ of objects of $\mathcal M$. 
We will do this by extending from \cite[Proposition 3.3]{AB} our methods on the generation of thick tensor ideals. 

\smallskip
Given a set $X$ of objects of $\mathcal M$, we now consider:
\begin{equation}\label{e3.5}
\bar{X}:=\{\mbox{$n\in \mathcal M$ $\vert$ $\exists$ $m\in X$, $a\in \mathcal K$ and $n'\in \mathcal M$ s.t. 
$n\oplus n'\cong a\ast m$  }\}
\end{equation}
We notice that $0\in \bar{X}$ and that $\bar{\bar{X}}=\bar{X}$. On the other hand, we let $\Delta(X)$ denote the collection
of all objects $m\in \mathcal M$ such that there exist $m'$, $m''\in X$ with $m$, $m'$ and $m''$ forming a distinguished
triangle in $\mathcal M$ (in some order). Now if $0\in X$, the fact that $m\overset{1}{\longrightarrow}m\longrightarrow 0$
forms a distinguished triangle for any $m\in \mathcal M$ shows that $X\subseteq \Delta(X)$. 

\smallskip

\begin{thm}\label{P3.5} Let $X$ be a set of objects in $\mathcal M$. Put $X_0:=X$ and inductively define:
\begin{equation}
X_{i+1}:=\Delta(\bar{X}_i)\qquad\forall\textrm{ }i\geq 0
\end{equation} Then, the thick $\mathcal K$-submodule $\mathcal K(X)$ of $\mathcal M$ generated by $X$ is given by the union:
\begin{equation}\label{e3.7}
\mathcal K(X)=\underset{i=0}{\overset{\infty}{\bigcup}}\textrm{ }X_i
\end{equation}
\end{thm}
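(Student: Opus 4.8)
The plan is to show that the set $Y:=\bigcup_{i=0}^\infty X_i$ is a thick $\mathcal K$-submodule containing $X$, and then that it is the smallest such. Containment of $X=X_0$ is immediate, so the bulk of the work is verifying the three axioms of Definition \ref{D3.1} for $Y$, and the minimality at the end is a short induction. Before doing that, I would record the basic monotonicity facts: $\bar{(\cdot)}$ and $\Delta(\cdot)$ are order-preserving, $\bar{X}\supseteq X$ since one may take $n'=0$ and $a=1$ (using the unit property of the action), and $0\in X_i$ for all $i\geq 1$ by the triangle $m\xrightarrow{1}m\to 0$; consequently $X_i\subseteq \Delta(\bar{X_i})=X_{i+1}$, so the union is a \emph{filtered} (increasing) union. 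This last point is what makes the three closure axioms manageable: any finite collection of objects of $Y$ already lies in a single $X_i$.

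Next I would check the three conditions. For condition (a), given $n\in Y$ and $a\in\mathcal K$, pick $i$ with $n\in X_i$; then $a\ast n\in \bar{X_i}$ directly from the definition \eqref{e3.5} (take $m=n$, $n'=0$, and the object $a\ast n$ itself), and $\bar{X_i}\subseteq \Delta(\bar{X_i})=X_{i+1}\subseteq Y$ because $0\in X_i$ forces $\bar{X_i}\subseteq \Delta(\bar{X_i})$ as above. For condition (b), if $m\oplus m'\in X_i$ then $m,m'\in \bar{X_i}\subseteq Y$ by taking $n'=m'$ (resp. $n'=m$) and $a=1$ in \eqref{e3.5}; conversely if $m\in X_i$ and $m'\in X_j$ with $i\leq j$, then both lie in $X_j\subseteq \bar{X_j}$, and since $0\in\bar{X_j}$, the triangle $m\to m\oplus m'\to m'$ (which is distinguished, being the direct sum of $m\xrightarrow{1}m\to 0$ and $0\to m'\xrightarrow{1}m'$) shows $m\oplus m'\in\Delta(\bar{X_j})=X_{j+1}\subseteq Y$. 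For condition (c), given a distinguished triangle with two of its three vertices in $Y$, choose $i$ so that both lie in $X_i\subseteq\bar{X_i}$; then by definition of $\Delta$ the third vertex is in $\Delta(\bar{X_i})=X_{i+1}\subseteq Y$. Replateness of $Y$ follows once (c) is known, exactly as in the remark after Definition \ref{D3.1}. Hence $Y\in SMod(\mathcal M)$.

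For minimality, let $\mathcal L$ be any thick $\mathcal K$-submodule with $X\subseteq\mathcal L$. I would show $X_i\subseteq\mathcal L$ by induction on $i$: the base case $i=0$ is the hypothesis, and for the step, if $X_i\subseteq\mathcal L$ then $\bar{X_i}\subseteq\mathcal L$ because for $n\oplus n'\cong a\ast m$ with $m\in X_i\subseteq\mathcal L$ we have $a\ast m\in\mathcal L$ by condition (a), hence $n\oplus n'\in\mathcal L$ by repleteness, hence $n\in\mathcal L$ by condition (b); and then $\Delta(\bar{X_i})\subseteq\mathcal L$ by condition (c) applied to distinguished triangles with two vertices in $\bar{X_i}\subseteq\mathcal L$. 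Thus $Y=\bigcup_i X_i\subseteq\mathcal L$, and combined with $Y\in SMod(\mathcal M)$ and $X\subseteq Y$ this gives $Y=\mathcal K(X)$.

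The main obstacle, as I see it, is purely bookkeeping rather than conceptual: one must be careful that $\Delta$ as defined only uses pairs of objects \emph{already in the given set}, so to place a new object via a distinguished triangle one genuinely needs the \emph{other two} vertices in $\bar{X_i}$ — this is why feeding in $\bar{X_i}$ (which is closed under $a\ast(-)$ and contains $0$) rather than $X_i$ at each stage is essential, and why the presence of $0$ in every $X_i$ (for $i\geq 1$) is used repeatedly to manufacture the auxiliary distinguished triangles $m\xrightarrow{1}m\to 0$. One should also note explicitly that the constructions \eqref{e3.5} and the $\Delta$-construction only ever produce objects of $\mathcal M$ up to isomorphism, which is harmless since any thick $\mathcal K$-submodule is replete. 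No step requires the finite-type or closure-operator hypotheses; this proposition is a purely structural description of generated submodules, to be used in the subsequent monoid argument.
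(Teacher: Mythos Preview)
Your proof is correct and follows essentially the same approach as the paper's: both show the union is a thick $\mathcal K$-submodule by verifying the three axioms using $\bar{X_i}\subseteq X_{i+1}$ and the ability to place any finite collection inside a common $X_j$, and both observe that each $X_i$ is contained in any thick $\mathcal K$-submodule containing $X$. Your write-up is in fact more careful than the paper's, which leaves the monotonicity $X_i\subseteq X_{i+1}$, the ``only if'' direction of axiom (b), and the minimality induction implicit.
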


\begin{proof} From the construction, it is clear that the submodule $\mathcal K(X)$ generated by $X$ contains
each $X_i$. In order to prove \eqref{e3.7}, it therefore suffices to show that 
the union $\underset{i=0}{\overset{\infty}{\bigcup}}\textrm{ }X_i$ is a thick submodule. For the sake of convenience,
we set $X':=\underset{i=0}{\overset{\infty}{\bigcup}}\textrm{ }X_i$. We now choose some $m\in X'$. Then, we can
find some $i\geq 0$ such that $m\in X_i$. Now, for any $a\in \mathcal K$, it is clear that 
$a\ast m\in \bar{X}_i\subseteq \Delta(\bar{X}_i)=X_{i+1}\subseteq X'$. Similarly, if $m$ splits as a direct sum
$m\cong m_1\oplus m_2$, both $m_1 $ and $m_2$ lie in $\bar{X}_i\subseteq X_{i+1}\subseteq X'$. 

\smallskip
Finally, suppose that we have a distinguished triangle $m'\longrightarrow m\longrightarrow m''$ in $\mathcal M$ such 
that two of $m$, $m'$ and $m''\in X'$. For the sake of definiteness, suppose that $m'$, $m''\in X'$. Then we can choose
some $j\geq 0$ large enough so that both $m'$, $m''\in X_j$. But then, $m\in \Delta(X_j)\subseteq \Delta(\bar{X}_j)
=X_{j+1}\subseteq X'$. It follows that $X'$ is a thick $\mathcal K$-submodule of $\mathcal M$.

\end{proof}

\smallskip
We note that one of the simple consequences of Proposition \ref{P3.5} is the fact that if $\{a_i\}_{i\in I}$ is a family
of objects of $\mathcal K$ and $\{m_j\}_{j\in J}$ is a family of objects of $\mathcal M$, the submodule generated
by   $\{a_i\ast m_j\}_{i\in I,j\in J}$ contains all the objects $a\ast m$, where $a$ (resp. $m$) lies in the ideal 
of $\mathcal K$ (resp. the submodule of $\mathcal M$) generated by $\{a_i\}_{i\in I}$  (resp. by $\{m_j\}_{j\in J}$). 
In the case of thick tensor ideals with $\mathcal K=\mathcal M$, we have noted this consequence  in \cite[Lemma 3.4]{AB}.  However, we should
mention that this  fact has been previously established by a different approach (in the slightly different case of localizing submodules)
by Stevenson in \cite[Lemma 3.11]{GS}. 

\smallskip
The following result gives us a better understanding of generating sets of thick $\mathcal K$-submodules: we show that any element $m$ in the submodule
$\mathcal K(X)$ can be obtained starting from only finitely many objects in the generating set $X$.

\smallskip
\begin{thm}\label{P3.6}  Let $X$ be a set of objects in $\mathcal M$ and let $\mathcal K(X)$ be the thick submodule generated by $X$. Then, given an object $m\in \mathcal K(X)$, there exist finitely many objects $m_1$, $m_2$, ... $m_k\in X$ such that
$m$ lies in the submodule generated by the set $\{m_1,m_2,...,m_k\}$.
\end{thm}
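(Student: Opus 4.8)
The plan is to induct on the filtration $\mathcal K(X)=\bigcup_{i\geq 0}X_i$ supplied by Proposition \ref{P3.5}. Concretely, I would prove by induction on $i\geq 0$ the statement $P(i)$: \emph{for every object $n\in X_i$ there is a finite subset $F\subseteq X$ with $n\in \mathcal K(F)$}, where $\mathcal K(F)$ denotes the thick submodule generated by $F$. Granting all the $P(i)$, any $m\in \mathcal K(X)$ lies in some $X_i$ by \eqref{e3.7}, and $P(i)$ gives exactly the claimed finite generating set $F=\{m_1,\dots,m_k\}$. Before starting I would record the elementary monotonicity fact that $F\subseteq F'$ implies $\mathcal K(F)\subseteq \mathcal K(F')$ (since $\mathcal K(F')$ is a thick $\mathcal K$-submodule containing $F$), which lets one amalgamate two finite generating sets into one.

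The base case $i=0$ is trivial: for $n\in X_0=X$ take $F=\{n\}$. For the inductive step, assume $P(i)$ and recall $X_{i+1}=\Delta(\bar X_i)$; I would first propagate the conclusion to $\bar X_i$. So let $n\in \bar X_i$. By \eqref{e3.5} there are $m\in X_i$, $a\in \mathcal K$ and $n'\in \mathcal M$ with $n\oplus n'\cong a\ast m$. Applying $P(i)$, pick a finite $F\subseteq X$ with $m\in \mathcal K(F)$; condition (a) of Definition \ref{D3.1} gives $a\ast m\in \mathcal K(F)$, hence $n\oplus n'\in \mathcal K(F)$, and then condition (b) forces $n\in \mathcal K(F)$. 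Thus the conclusion of $P(i)$ holds for all of $\bar X_i$.

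Now take $m\in X_{i+1}=\Delta(\bar X_i)$. By definition of $\Delta$ there exist $m',m''\in \bar X_i$ such that $m$, $m'$, $m''$ form a distinguished triangle in $\mathcal M$ in some order. By the previous paragraph there are finite subsets $F',F''\subseteq X$ with $m'\in \mathcal K(F')$ and $m''\in \mathcal K(F'')$; set $F:=F'\cup F''$, still finite. By monotonicity $m',m''\in \mathcal K(F)$, and since $\mathcal K(F)$ is thick, condition (c) of Definition \ref{D3.1} (two out of three objects of a distinguished triangle) yields $m\in \mathcal K(F)$. This establishes $P(i+1)$ and completes the induction.

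The argument is largely bookkeeping, and I do not expect a serious obstacle; the one point requiring care is the nested structure of the inductive step — one must first carry the finiteness property from $X_i$ to $\bar X_i$ using the action and direct-summand axioms (a) and (b), and only then apply the two-out-of-three axiom (c) to reach $\Delta(\bar X_i)=X_{i+1}$. Everything rests on the explicit description in Proposition \ref{P3.5} together with the three defining closure properties of a thick $\mathcal K$-submodule in Definition \ref{D3.1}.
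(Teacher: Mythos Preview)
Your proof is correct and follows essentially the same approach as the paper: both induct along the filtration $\mathcal K(X)=\bigcup_i X_i$ from Proposition \ref{P3.5}, pass from $X_i$ to $\bar X_i$ via conditions (a) and (b), and then to $X_{i+1}=\Delta(\bar X_i)$ via condition (c), combining the two resulting finite subsets of $X$. The only difference is organizational---you separate the propagation to $\bar X_i$ as its own paragraph while the paper handles it inline---but the argument is the same.
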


\begin{proof} We maintain the notation from the proof of Proposition \ref{P3.5}. We know that 
$\mathcal K(X)=\underset{i=0}{\overset{\infty}{\bigcup}}\textrm{ }X_i$. We now suppose that for any $0\leq j\leq N$ 
and any object $m\in X_j$, we can find finitely many objects $m_1$, $m_2$, ... $m_k\in X$ such that
$m$ lies in the submodule generated by the set $\{m_1,m_2,...,m_k\}$. This is already true for $N=0$. We now pick 
an object $m\in X_{N+1}$. 

\smallskip
By definition, $X_{N+1}=\Delta(\bar{X}_N)$ and hence we can find elements $n'$, $n''\in \bar{X}_N$ such that
$m$, $n'$ and $n''$ form a distinguished triangle (in some order). Now, applying the definition of 
$\bar{X}_N$, it follows that we can find objects $a'$, $a''\in \mathcal K$ and $m'$, $m''\in  X_N$ such that
$n'$ (resp. $n''$) is a direct summand of $a'\ast m'$ (resp. $a''\ast m''$). It follows that $m\in X_{N+1}$ lies
in the submodule generated by $m'$ and $m''$. 

\smallskip
However, since $m'$ and $m''$ lie in $X_N$, we can find a finite set $\{m_1,...,m_k\}$ (resp.  
$\{m_{k+1},...,m_l\}$) of objects in $X$ such that $m'$ (resp. $m''$) lies in the submodule
generated by $\{m_1,...,m_k\}$ (resp.  
$\{m_{k+1},...,m_l\}$). Then, $m\in X_{N+1}$ must lie in the thick $\mathcal K$-submodule generated by  the finite set 
$\{m_1,...,m_k,m_{k+1},...,m_l\}\subseteq X$. This proves the result.

\end{proof}

Given submodules $\mathcal N$, $\mathcal N'\in SMod(\mathcal M)$,
we denote by $\mathcal N+\mathcal N'$ the smallest thick submodule of $\mathcal M$ containing both 
$\mathcal N$ and $\mathcal N'$.  It is clear that addition of submodules makes $SMod(\mathcal M)$ into a commutative monoid.
However, in order to make $SMod^c(\mathcal M)$ into a monoid, we will need the following
result.

\begin{lem}\label{monoid} (a) Let $c:SMod(\mathcal M)\longrightarrow SMod(\mathcal M)$ be an operator
that is extensive, order-preserving and of finite type. Given a submodule $\mathcal N\subseteq 
\mathcal M$, we set $c^\infty(\mathcal N):=\underset{i\geq 0}{\bigcup} c^i(\mathcal N)$. 
Then, for any $\mathcal N\in SMod(\mathcal M)$, the object $c^\infty(\mathcal N)$ lies in $SMod^c(\mathcal M)$. 

\smallskip
(b) The operator $c^\infty: SMod(\mathcal M)\longrightarrow SMod(\mathcal M)$ is of finite type, 
i.e., for any $\mathcal N\in SMod(\mathcal M)$, we have $c^\infty(\mathcal N)=\underset{n\in \mathcal N}{\bigcup}c^\infty(\mathcal K(n))$.
\end{lem}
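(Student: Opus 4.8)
The plan is to prove the two parts of Lemma \ref{monoid} in sequence, using the finite-type hypothesis as the main tool throughout.

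For part (a), I would first observe that $c^\infty(\mathcal N)$ is genuinely a thick $\mathcal K$-submodule: it is a union of submodules, but since $c$ is extensive and order-preserving, the chain $\mathcal N\subseteq c(\mathcal N)\subseteq c^2(\mathcal N)\subseteq\cdots$ is increasing, hence filtered directed, and a filtered directed union of thick $\mathcal K$-submodules is again one (one checks the three conditions of Definition \ref{D3.1} pointwise, as was done implicitly in the proof of Proposition \ref{Pfamily}). The substance is to show $c(c^\infty(\mathcal N))=c^\infty(\mathcal N)$. Since $c$ is extensive, only the inclusion $c(c^\infty(\mathcal N))\subseteq c^\infty(\mathcal N)$ needs argument. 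I would pick $m\in c(c^\infty(\mathcal N))$; by finite type there is $n\in c^\infty(\mathcal N)$ with $m\in c(\mathcal K(n))$, and by construction $n\in c^i(\mathcal N)$ for some $i$. Then $\mathcal K(n)\subseteq c^i(\mathcal N)$, so by order-preservation $m\in c(\mathcal K(n))\subseteq c(c^i(\mathcal N))=c^{i+1}(\mathcal N)\subseteq c^\infty(\mathcal N)$, which is what we want.

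For part (b), the inclusion $\underset{n\in\mathcal N}{\bigcup}c^\infty(\mathcal K(n))\subseteq c^\infty(\mathcal N)$ is immediate from order-preservation applied levelwise (each $c^i$ is order-preserving, so $c^i(\mathcal K(n))\subseteq c^i(\mathcal N)$). For the reverse inclusion I would argue by induction on $i$ that $c^i(\mathcal N)\subseteq\underset{n\in\mathcal N}{\bigcup}c^\infty(\mathcal K(n))$ for every $i\geq 0$; taking the union over $i$ then gives the claim. The base case $i=0$ is clear since $\mathcal N=\bigcup_{n\in\mathcal N}\mathcal K(n)$. For the inductive step, take $m\in c^{i+1}(\mathcal N)=c(c^i(\mathcal N))$; by finite type there is $p\in c^i(\mathcal N)$ with $m\in c(\mathcal K(p))$, and by the inductive hypothesis $p\in c^\infty(\mathcal K(n))$ for some $n\in\mathcal N$, i.e.\ $p\in c^j(\mathcal K(n))$ for some $j$. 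Then $\mathcal K(p)\subseteq c^j(\mathcal K(n))$, so $m\in c(\mathcal K(p))\subseteq c^{j+1}(\mathcal K(n))\subseteq c^\infty(\mathcal K(n))$, completing the induction.

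The only real subtlety — and the step I would be most careful about — is the interplay between finite type and the countably-many iterates: one must make sure that the witness $n$ produced by finite type at level $c^\infty$ already lands in some fixed finite iterate $c^i(\mathcal N)$, so that order-preservation can be applied to conclude $m\in c^{i+1}(\mathcal N)$ rather than only in the union. This is fine because $c^\infty(\mathcal N)$ is by definition the union of the $c^i(\mathcal N)$, so any single element belongs to one of them; but it is worth stating explicitly. Everything else is a routine transfer of the extensive/order-preserving/finite-type formalism already exercised in the proof of Proposition \ref{Pfamily}.
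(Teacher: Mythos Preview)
Your proof of part (a) is correct and essentially identical to the paper's argument, including the explicit remark that $c^\infty(\mathcal N)$ is a filtered directed union of submodules (the paper leaves this implicit).

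For part (b) your argument is correct but organized differently from the paper's. You proceed by induction on $i$, showing $c^i(\mathcal N)\subseteq\bigcup_{n\in\mathcal N}c^\infty(\mathcal K(n))$ by chasing finite-type witnesses through the iterates. The paper instead sets $\mathcal N':=\bigcup_{n\in\mathcal N}c^\infty(\mathcal K(n))$, uses part (a) to show directly that $\mathcal N'$ is $c$-fixed (since each $c^\infty(\mathcal K(n))$ is, and the union is filtered), and then observes that $\mathcal N\subseteq\mathcal N'$ together with order-preservation forces $c^i(\mathcal N)\subseteq\mathcal N'$ for all $i$ in one stroke, with no induction needed. Your approach is slightly more hands-on and self-contained; the paper's is shorter because it leverages (a) to avoid the inductive bookkeeping. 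Both are valid and rest on the same underlying use of finite type and order-preservation.
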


\begin{proof}  (a) We choose some $\mathcal N\in SMod(\mathcal M)$ and some $n\in c(c^\infty(\mathcal N))$. Since $c$ is of finite type, there
exists some $n_0\in c^\infty(\mathcal N)$ such that $n\in c(\mathcal K(n_0))$. Then, we can choose
$i_0\geq 1$ such that $n_0\in c^{i_0}(\mathcal N)$, i.e., $\mathcal K(n_0)\subseteq c^{i_0}(\mathcal N)$. But then,
$c(\mathcal K(n_0))\subseteq c^{i_0+1}(\mathcal N)\subseteq c^\infty(\mathcal N)$ which shows that
$n\in c^\infty(\mathcal N)$, i.e., $c(c^\infty(\mathcal N))\subseteq c^\infty(\mathcal N)$. Since $c$
is extensive, it follows that $c^\infty(\mathcal N)\in SMod^c(\mathcal N)$. 

\smallskip
(b) For the sake of convenience, we set $\mathcal N':=\underset{n\in \mathcal N}{\bigcup}c^\infty(\mathcal K(n))$. 
For any $n_1$, $n_2\in \mathcal N$, it is clear that $c^\infty(\mathcal K(n_1))$, $c^\infty(\mathcal K(n_2))$ 
both lie inside $c^\infty(\mathcal K(n_1\oplus n_2))$, which shows that $\mathcal N'$ is a filtered 
union of submodules and hence $\mathcal N'\in SMod(\mathcal M)$. We now choose some $n'\in \mathcal N'$. 
Then, $n'\in c^\infty(\mathcal K(n))$ for some $n\in \mathcal N$, i.e., $\mathcal K(n')\subseteq c^\infty(\mathcal K(n))$.
From part (a), we know that $c^\infty(\mathcal K(n))$ is fixed by $c$ and hence
$c(\mathcal K(n'))\subseteq c(c^\infty(\mathcal K(n)))=c^\infty(\mathcal K(n))\subseteq \mathcal N'$. Since
$c$ is an operator of finite type, we know that $c(\mathcal N')=\underset{n'\in \mathcal N'}{\bigcup}c(\mathcal K(n'))$
and hence $c(\mathcal N')=\mathcal N'$. On the other hand, it is clear from the definitions
that $\mathcal N\subseteq \mathcal N'\subseteq c^\infty(\mathcal N)$. It follows that 
$c^i(\mathcal N)\subseteq \mathcal N'\subseteq c^\infty(\mathcal N)$ for each $i\geq 0$. 
Hence, $c^\infty(\mathcal N)=\mathcal N'$, which proves the result. 

\end{proof}

\begin{thm}\label{P3.7} Let $c:SMod(\mathcal M)\longrightarrow SMod(\mathcal M)$ be an operator
that is extensive, order-preserving and of finite type. Let $\mathcal N\in SMod^c(\mathcal M)$ be a submodule
of $\mathcal M$ that is fixed by $c$. Then, the function $f$ defined as follows:
\begin{equation}
f:SMod^c(\mathcal M)\longrightarrow SMod^c(\mathcal M) 
\qquad \mathcal N'\mapsto c^\infty(\mathcal N+\mathcal N')
\end{equation} is a continous function on the spectral space $SMod^c(\mathcal M)$. In other words, 
the spectral space $SMod^c(\mathcal M)$ equipped with the operation $(\mathcal N,
\mathcal N')\mapsto c^\infty(\mathcal N+\mathcal N')$  is a topological
monoid.
\end{thm}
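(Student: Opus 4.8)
The plan is to verify that the map $f\colon \mathcal N'\mapsto c^\infty(\mathcal N+\mathcal N')$ is continuous, and then to observe that the monoid axioms are formal consequences of Lemma~\ref{monoid} together with properties of the sum $+$. Since $\{U(m)\cap SMod^c(\mathcal M)\}_{m\in\mathcal M}$ is a basis of open sets for $SMod^c(\mathcal M)$ by Proposition~\ref{P3.4}, it suffices to show that $f^{-1}(U(m)\cap SMod^c(\mathcal M))$ is open for every $m\in\mathcal M$.

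The key computation is the following. Fix $m\in\mathcal M$. I claim that $f(\mathcal N')\in U(m)$, i.e.\ $m\in c^\infty(\mathcal N+\mathcal N')$, if and only if there exists a finite set of objects $n_1,\dots,n_k\in\mathcal N'$ such that $m\in c^\infty\bigl(\mathcal N+\mathcal K(n_1\oplus\cdots\oplus n_k)\bigr)$; and this in turn holds iff $\mathcal N'\in U(n_1)\cap\cdots\cap U(n_k)$ for some such tuple witnessing membership. To get this, note that $\mathcal N+\mathcal N'$ is the thick submodule generated by the set $X$ of all objects of $\mathcal N$ together with all objects of $\mathcal N'$, so by Proposition~\ref{P3.6} applied to $\mathcal N+\mathcal N'$, any object in $\mathcal N+\mathcal N'$ — and more to the point, after iterating $c$, any object of $c^\infty(\mathcal N+\mathcal N')$ using that $c$ is of finite type and invoking Lemma~\ref{monoid}(b) for $c^\infty$ — arises from finitely many generators, hence from $\mathcal N$ together with finitely many objects $n_1,\dots,n_k$ of $\mathcal N'$. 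Since $\mathcal N\subseteq\mathcal N+\mathcal K(n_1\oplus\cdots\oplus n_k)$ and the latter only grows as we enlarge the finite set, we may package this as
\begin{equation}\label{eq:fcont}
f^{-1}\bigl(U(m)\cap SMod^c(\mathcal M)\bigr)=\bigcup \bigl(U(n_1)\cap\cdots\cap U(n_k)\cap SMod^c(\mathcal M)\bigr),
\end{equation}
the union ranging over all finite tuples $(n_1,\dots,n_k)$ of objects of $\mathcal M$ with $m\in c^\infty\bigl(\mathcal N+\mathcal K(n_1\oplus\cdots\oplus n_k)\bigr)$. The inclusion $\supseteq$ in \eqref{eq:fcont} uses that $c^\infty$ is order-preserving (which follows since each $c^i$ is) and the inclusion $\subseteq$ is exactly the finiteness statement above. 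The right-hand side is a union of open sets, so $f$ is continuous.

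It remains to check the monoid structure. The operation $(\mathcal N,\mathcal N')\mapsto c^\infty(\mathcal N+\mathcal N')$ takes values in $SMod^c(\mathcal M)$ by Lemma~\ref{monoid}(a). The identity element is $c^\infty(0)=0$ if $0$ is $c$-fixed, or more safely the submodule $c^\infty(\mathbf 0)$ where $\mathbf 0$ is the zero submodule: since $\mathbf 0\subseteq\mathcal N'$ for every $\mathcal N'$, one has $c^\infty(c^\infty(\mathbf 0)+\mathcal N')=c^\infty(\mathcal N')=\mathcal N'$ for $\mathcal N'\in SMod^c(\mathcal M)$, using idempotence of $c^\infty$ on $c$-fixed points. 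Commutativity is immediate from commutativity of $+$. For associativity one checks $c^\infty\bigl(c^\infty(\mathcal N_1+\mathcal N_2)+\mathcal N_3\bigr)=c^\infty(\mathcal N_1+\mathcal N_2+\mathcal N_3)$: the inclusion $\supseteq$ is clear from extensivity and order-preservation, while for $\subseteq$ note $\mathcal N_1+\mathcal N_2\subseteq c^\infty(\mathcal N_1+\mathcal N_2)$ gives $\mathcal N_1+\mathcal N_2+\mathcal N_3\subseteq c^\infty(\mathcal N_1+\mathcal N_2)+\mathcal N_3$, and applying $c^\infty$ (order-preserving) together with idempotence of $c^\infty$ on the $c$-fixed submodule $c^\infty(c^\infty(\mathcal N_1+\mathcal N_2)+\mathcal N_3)$ closes the argument; by symmetry both triple products equal $c^\infty(\mathcal N_1+\mathcal N_2+\mathcal N_3)$.

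The main obstacle is establishing \eqref{eq:fcont}, specifically the finiteness direction: one must carefully combine Proposition~\ref{P3.6} (finite generation of objects in a thick submodule) with the fact that $c^\infty$ is of finite type (Lemma~\ref{monoid}(b)) to conclude that membership $m\in c^\infty(\mathcal N+\mathcal N')$ is already witnessed at the level of $\mathcal N$ plus finitely many objects of $\mathcal N'$ — the point being that $c^\infty$ itself is a countable iteration, so one should first use finite type of $c^\infty$ to reduce to $m\in c^\infty(\mathcal K(n))$ for a single $n\in\mathcal N+\mathcal N'$, and then apply Proposition~\ref{P3.6} to that single $n$ inside $\mathcal N+\mathcal N'$. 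The remaining monoid verifications are routine order-theoretic manipulations with $c^\infty$ and $+$.
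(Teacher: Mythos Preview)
Your argument is correct and follows essentially the same route as the paper: you reduce continuity to showing $f^{-1}(U(m)\cap SMod^c(\mathcal M))$ is open, then use Lemma~\ref{monoid}(b) (finite type of $c^\infty$) to pick a single $n\in\mathcal N+\mathcal N'$ with $m\in c^\infty(\mathcal K(n))$ and apply Proposition~\ref{P3.6} to extract finitely many generators from $\mathcal N'$. The only cosmetic difference is that the paper immediately collapses your finite tuple $(n_1,\dots,n_k)$ to the single object $n_0=n_1\oplus\cdots\oplus n_k$ via $U(n_1)\cap\cdots\cap U(n_k)=U(n_0)$ (Proposition~\ref{P3.2}(c)), and it leaves the monoid axioms implicit whereas you spell them out; note, though, that in your associativity check the sentence you label ``$\subseteq$'' actually re-proves ``$\supseteq$''---the genuine $\subseteq$ step is $c^\infty(\mathcal N_1+\mathcal N_2)+\mathcal N_3\subseteq c^\infty(\mathcal N_1+\mathcal N_2+\mathcal N_3)$ followed by idempotence of $c^\infty$.
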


\begin{proof} Since the collection $\{U(m)\cap SMod^c(\mathcal M)\}_{m\in \mathcal M}$ forms a basis of open sets, it suffices
to check that each $f^{-1}(U(m)\cap SMod^c(\mathcal M))$ is open in $SMod^c(\mathcal M)$. For each object $m\in \mathcal M$, we
denote by $\mathcal N^c(m)$ the set of isomorphism classes of objects $m'\in \mathcal M$ such that $m
\in c^\infty(\mathcal N+\mathcal K(m'))$.  We claim that:
\begin{equation}
f^{-1}(U(m)\cap SMod^c(\mathcal M))=\underset{m'\in \mathcal N^c(m)}{\bigcup}\textrm{ }(U(m')
\cap SMod^c(\mathcal M))
\end{equation}
On the one hand, if we have any $\mathcal N'\in U(m')\cap SMod^c(\mathcal M)$ for some $m'\in \mathcal N^c(m)$, then
$m'\in \mathcal N'$ and hence $m\in c^\infty(\mathcal N+\mathcal N')$, i.e., $f(\mathcal N')=c^\infty(\mathcal N+\mathcal N')\in 
U(m)\cap SMod^c(\mathcal M)$. 

\smallskip
Conversely, suppose that we choose some $\mathcal N'\in f^{-1}(U(m)\cap SMod^c(\mathcal M))$. Then, $m\in c^\infty(\mathcal N+
\mathcal N')$. From Lemma \ref{monoid}(b), we know that  $c^\infty$ is of finite type and we can choose $n\in \mathcal N+\mathcal N'$ such that
$m\in c^\infty(\mathcal K(n))$. We let $X$ (resp. $X'$) denote the set of isomorphism classes of objects in 
$\mathcal N$ (resp. $\mathcal N'$). Then, $X\cup X'$ is a generating set for the submodule 
$\mathcal N+\mathcal N'$. We know that $n\in \mathcal N+\mathcal N'$ and it follows from Proposition \ref{P3.6}
that we can choose a finite set  $\{n_1,...,n_k,n_{k+1},...,n_l\}\subseteq X\cup X'$ such that $\{n_1,...,n_k\}\subseteq X$,
$\{n_{k+1}, ..., n_l\}\in X'$ and $n\in \mathcal N+\mathcal N'$ lies in the submodule generated by
$\{n_1,...,n_k,n_{k+1},...,n_l\}$. We now consider the object $n_0:=n_{k+1}\oplus n_{k+2}\oplus ...\oplus n_l$ which lies
in $\mathcal N'$, i.e., $\mathcal N'\in U(n_0)$. Also $m$ lies in $c^\infty(\mathcal N+\mathcal K(n_0))$
 and hence we can find some $n'\in \mathcal N^c(m)$ such that $n_0\cong n'$. It follows that 
$\mathcal N'\in U(n_0)=U(n')$. Hence, the result follows.

\end{proof}

\small


\begin{thebibliography}{99}


\bibitem{BMain} P.~Balmer,
The spectrum of prime ideals in tensor triangulated categories, 
{\it J. Reine Angew. Math.} {\bf 588} (2005), 149--168. 

\bibitem{Balmer2} P.~Balmer, Supports and filtrations in algebraic geometry and modular representation theory,
{\it Amer. J.
Math.} {\bf 129} (5), 1227--1250 (2007). 

\bibitem{Balmer3} P.~Balmer, Spectra, spectra, spectra -- tensor triangular spectra versus Zariski spectra of endomorphism
rings, {\it Algebr. Geom. Topol.} {\bf 10}(3), 1521--1563 (2010). 

\bibitem{Balmer4} P.~Balmer, Tensor triangular geometry. In: Bhatia, R. (ed.) {\it Proceedings of the International Congress
of Mathematicians}. Vol. II, pp. 85--112, Hindustan Book Agency, New Delhi (2010).

\bibitem{BF1} P.~Balmer, G.~Favi,  Gluing techniques in triangular geometry, {\it Q. J. Math.} {\bf  58} (4), 415--441 (2007).

\bibitem{BF2} P.~Balmer, G.~Favi,  Generalized tensor idempotents and the telescope conjecture, 
{\it  Proc. London Math.
Soc.3) } {\bf 102} (6), 1161--1185 (2011)

\bibitem{AB} A.~Banerjee,  Realizations of pairs and Oka families in tensor triangulated categories, {\it European Journal of Mathematics}, Vol. {
\bf 2}, No. 3, 760--797 (2016).

\bibitem{AB2} A.~Banerjee, Closure operators in abelian categories and spectral spaces, Preprint (2016). 

\bibitem{BCR} D.~J.~Benson, J.~F.~Carlson, J.~Rickard,  Thick subcategories of the stable module category,
{\it  Fund. Math.}
{\bf 153}, 59--80 (1997).

\bibitem{DHS} E.~S.~Devinatz, M.~J.~Hopkins, J.~H.~Smith,  Nilpotence and stable homotopy theory I,
{\it  Ann. Math. (2)}
{\bf 128} (2), 207--241 (1988). 

\bibitem{Eps} N.~Epstein, 
A guide to closure operations in commutative algebra, {\it Progress in commutative algebra 2}, 1--37, Walter de Gruyter, Berlin, 2012.


\bibitem{Eps1} N.~Epstein, 
Semistar operations and standard closure operations, 
{\it Comm. Algebra} {\bf 43} (2015), no. 1, 325--336. 

\bibitem{Fino2} C.~A.~Finocchiaro, M.~Fontana, K.~A.~Loper, Ultrafilter and constructible topologies on spaces of valuation domains, {\it Comm. Algebra},  {\bf 41} (2013), no. 5, 1825--1835.

\bibitem{Fino} C.~A.~Finocchiaro,  Spectral spaces and ultrafilters, {\it Comm. Algebra}
{\bf  42} (2014), no. 4, 1496--1508.

\bibitem{FiFo} C.~A.~Finocchiaro, M.~Fontana, D.~Spirito, A topological version of Hilbert's Nullstellensatz,
{\it J. Algebra}, {\bf 461} (2016), 25--41. 


\bibitem{Hoch} M.~Hochster, Prime ideal structure in commutative rings, {\it Trans. Amer.
Math. Soc.} {\bf 142} (1969), 43--60.

\bibitem{Kl} S.~Klein,  Chow groups of tensor triangulated categories, {\it J. Pure Appl. Algebra}, {\bf 220} (4), 1343--1381
(2016). 

\bibitem{Kl2} S.~Klein, Intersection products for tensor triangular Chow groups,  {\it J. Algebra} {\bf 449}, 497--538 (2016). 

\bibitem{Krause} H.~Krause, 
Deriving Auslander's formula,  
{\it Doc. Math.} {\bf 20} (2015), 669--688.

\bibitem{Pete} T.~J.~Peter, Prime ideals of mixed Artin–Tate motives, {\it J. K-Theory} {\bf 11} (2), 331--349 (2013).

\bibitem{Bernie} B.~Sanders, Higher comparison maps for the spectrum of a tensor triangulated category, {\it Adv. Math.}
{\bf 247}, 71--102 (2013).

\bibitem{GS} G.~Stevenson,  Support theory via actions of tensor triangulated categories,
{\it J. Reine Angew. Math.} {\bf 681} (2013), 219--254. 

\bibitem{GS2} G.~Stevenson, Subcategories of singularity categories via tensor actions, 
{\it  Compositio Math. } {\bf 150} (2),
229--272 (2014).

\bibitem{GS4} G.~Stevenson, Derived categories of absolutely flat rings, 
{\it Homology Homotopy Appl.} {\bf 16} (2014), no. 2, 45--64. 



\bibitem{Thom} R.W.~Thomason, The classification of triangulated subcategories, {\it Compositio Math.}
{\bf  105}, 1--27 (1997).

\end{thebibliography}
\end{document}